\newtheorem{theorem}{Theorem}[section]
\newtheorem{observation}[theorem]{Observation}
\newtheorem{lemma}[theorem]{Lemma}
\begin{document}

	\title{Trees with distinguishing index equal distinguishing number plus one} 
	
	\author{
		Saeid Alikhani$^{a,}$\footnote{Corresponding author}
		\and Sandi Klav\v zar$^{b,c,d}$
		\and Florian Lehner$^{e}$
		\and Samaneh Soltani$^a$
	}
	
	\date{\today}
	
	\maketitle
	
	\begin{center}
		$^a$Department of Mathematics, Yazd University, 89195-741, Yazd, Iran\\
		{\tt alikhani@yazd.ac.ir, s.soltani1979@gmail.com}
		
		\medskip
		$^b$Faculty of Mathematics and Physics, University of Ljubljana, Slovenia\\
		{\tt sandi.klavzar@fmf.uni-lj.si}\\
		\medskip
		
		$^c$Faculty of Natural Sciences and Mathematics, University of Maribor, Slovenia\\
		\medskip
		
		$^d$Institute of Mathematics, Physics and Mechanics, Ljubljana, Slovenia\\ 
		
		\medskip
		$^e$Mathematics Institute, University of Warwick, Coventry, United Kingdom\\
		{\tt mail@florian-lehner.net}\\
		\medskip

	\end{center}
	
	\begin{abstract}
		The distinguishing number (index) $D(G)$ ($D'(G)$) of a graph $G$ is the least integer $d$ such that $G$ has an vertex (edge) labeling with $d$ labels that is preserved only by the trivial automorphism. It is known that for every graph $G$ we have $D'(G) \leq D(G) + 1$. In this note we characterize trees for which this inequality is sharp. We also show that if $G$ is a connected unicyclic graph, then $D'(G) = D(G)$. 
	\end{abstract}
	
	\noindent
	{\bf Keywords:} automorphism group; distinguishing index; distinguishing number; tree; unicyclic graph \\
	
	\noindent
	{\bf AMS Subj.\ Class.\ (2010)}: 05C15, 05E18

	\section{Introduction}
	\label{sec:intro}
	
	Let $G = (V(G), E(G))$ be a graph and let ${\rm Aut}(G)$ be its automorphism group. A labeling $\phi: V(G) \rightarrow [r]$ is {\em distinguishing} if no non-trivial element of ${\rm Aut}(G)$ preserves all the labels; such a labeling $\phi$ is a {\em distinguishing $r$-labeling}. More formally, $\phi$ is a distinguishing labeling if for every $\alpha\in {\rm Aut}(G)$, $\alpha\ne {\rm id}$, there exists $x\in V(G)$ such that $\phi(x) \neq \phi(\alpha(x))$. The {\em distinguishing number} $D(G)$ of a graph $G$ is the smallest $r$ such that $G$ admits a distinguishing $r$-labeling. 
	
	The introduction of the distinguishing number in 1996 by Albertson and Collins~\cite{albertson-1996} was a great success; by now about one hundred papers were written motivated by this seminal paper! The core of the research has been done on the invariant $D$ itself, either on finite \cite{Chan, immel-2017, Kim}  or infinite graphs~\cite{estaji-2017, lehner-2016, smith-2014}; see also the references therein. Extensions to group theory (cf.~\cite{klavzar-2006, wong-2010}) and arbitrary relational structures~\cite{laflamme-2010} were also investigated, as well as variations of the concept such as the distinguishing chromatic number~\cite{cavers-2013, collins-2006}. Moreover, very recently the game distinguishing number was introduced in~\cite{gravier-2017}. It is hence a bit surprising that the following variation of the distinguishing number---its edge version---was introduced only in 2015 by Kalinowski and Pil\'sniak~\cite{kalinowski-2015}. The {\em distinguishing index} $D'(G)$ of a graph $G$ is the smallest integer $d$ such that $G$ has an edge labeling with $d$ labels that is preserved only by the trivial automorphism. 
	
	Generally $D'(G)$ can be arbitrary smaller than $D(G)$, for instance if $p\geq 6$, then $D'(K_p)=2$ and $D(K_p)=p$. Conversely, there is an upper bound on $D'(G)$ in terms of $D(G)$.
	In~\cite[Theorem 11]{kalinowski-2015} (see also~\cite[Theorem 8]{lehner-2017+} for an alternative proof) it is proved that if $G$ is a connected graph of order at least $3$, then
	\begin{equation}
	\label{eq:bound}
	D'(G) \leq D(G) + 1\,.
	\end{equation}
	In this paper we give a characterisation of the trees which achieve equality. We further show that if $G$ is a connected unicyclic graph, then $D'(G) = D(G)$, showing that the inequality is never sharp for unicyclic graphs.


	
	\section{Some notation}
	\label{sec:notation}
	
	A tree $T$ is {\em unicentric} if its center (that is, the subgraph induced by the vertices of minimum eccentricity) consists of a single vertex and is {\em bicentric} otherwise. In the latter case the center is isomorphic to $K_2$ and will also be identified with its edge. 
	
	If $T$ is a bicentric tree with central edge $e = vw$, we denote by $T_v$ and $T_w$ the components of $T-e$, where $v\in T_v$ and $w\in T_w$.
	
	We will treat $T_v$ and $T_w$ as rooted trees with roots $v$ and $w$ respectively. Hence we make the following (obvious) definitions for rooted trees. An {\em automorphism of a rooted tree} is an automorphism of the underlying unrooted tree which fixes the root. Analogously, an isomorphism of rooted trees is an isomorphism which maps the root of one tree to the root of the other. An edge or vertex labeling of a rooted tree is called distinguishing, if the only automorphism (of the rooted tree) which preserves it is the identity.
	
	For rooted trees there is also a natural correspondence between vertex and edge labelings. Let $T$ be a rooted tree with root $v$. Let $f: V(T)\rightarrow [k]$ be a vertex labeling of $T$, where we use the notation $[k] := \{1,\dots,k\}$. Define $f_v':E(T) \rightarrow [k]$ as follows: If $e=xy\in E(T)$, where $d_T(y,v) < d_T(x,v)$, then set $f'_v(e) = f(x)$. Since each non-root vertex of $T$ has a unique predecessor in $T$, the labeling $f'_v$ is well-defined. It is also not hard to see, that this procedure is reversible (up to the colour of the root). We will call $f'_v$ the {\em (edge) co-labeling} of $f$ with respect to the root $v$. The following observation will be useful.
	
	\begin{observation}
		\label{obs:co-label}
		A labeling $f$ of a rooted tree is distinguishing if and only if the co-labeling $f_v'$ is distinguishing. In particular (since we can reverse the construction and the colour of the root plays no role) the distinguishing index and the distinguishing number of rooted trees are always equal.
	\end{observation}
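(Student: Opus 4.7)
The plan is to exploit the fact that every automorphism $\alpha$ of a rooted tree with root $v$ fixes $v$, and therefore preserves the parent relation. For a non-root vertex $x$, let $p(x)$ be its unique neighbor that is closer to $v$, and let $e_x := xp(x)$. The assignment $x \mapsto e_x$ is a bijection between $V(T)\setminus\{v\}$ and $E(T)$; moreover, because an automorphism preserves distances to the fixed root, $\alpha(e_x) = e_{\alpha(x)}$. Thus this bijection intertwines the action of ${\rm Aut}(T)$ (as root-fixing automorphisms) on non-root vertices with its action on edges.

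With this correspondence in hand, comparing preservation of $f$ and of $f'_v$ by a root-fixing $\alpha$ becomes a direct unwinding of definitions. Since $\alpha(v)=v$, preservation of $f$ by $\alpha$ is equivalent to $f(\alpha(x)) = f(x)$ holding for every non-root $x$. By the definition of the co-labeling, $f'_v(e_x) = f(x)$, and hence $f'_v(\alpha(e_x)) = f'_v(e_{\alpha(x)}) = f(\alpha(x))$. Therefore $f$ is preserved by $\alpha$ if and only if $f'_v$ is, and quantifying over all non-identity $\alpha$ yields the stated equivalence.

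For the ``in particular'' clause, observe that the map $f \mapsto f'_v$ uses no new labels and is essentially invertible: starting from any edge labeling $g:E(T)\to[k]$, setting $f(x) := g(e_x)$ for each non-root $x$ and giving $v$ any label in $[k]$ produces a vertex labeling $f$ with $f'_v = g$. Combining this reversibility with the equivalence just established shows that any distinguishing vertex labeling with $k$ labels yields a distinguishing edge labeling with $k$ labels and conversely, so $D(T)=D'(T)$ for rooted trees.

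Honestly, no single step constitutes a serious obstacle; the entire statement is a restatement of the fact that the non-root vertices of a rooted tree are in canonical, automorphism-equivariant bijection with its edges. The only point deserving mild care is that the label assigned to $v$ plays no role in whether $f$ is distinguishing, which follows immediately from the fact that every automorphism of the rooted tree fixes $v$ pointwise.
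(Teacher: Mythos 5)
Your proof is correct, and it is exactly the argument the paper leaves implicit: the paper states the observation without proof, taking it as immediate from the construction of the co-labeling, and your equivariant bijection $x\mapsto e_x$ between non-root vertices and edges (together with the remark that the root's label is irrelevant because every rooted-tree automorphism fixes the root) is precisely the formalization of that reasoning.
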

	
	\section{Extremal trees}
	\label{sec:trees}
	
	A characterisation of trees $T$ for which $D'(G) = D(G) + 1$ was suggested in~\cite[Theorem 9]{kalinowski-2015}. While equality holds for every tree in the proposed class ${\cal B}(h,d)$, there are further trees for which the inequality is sharp as the following example demonstrates. Let $T$ be the tree which consists of a central edge with four paths of length $2$ attached to each endpoint. The tree $T$ together with a distinguishing 2-labeling demonstrating that $D(T) = 2$ is shown in Fig.~\ref{counter}. On the other hand, it is easy to verify that $D'(T) = 3$. But $T$ does not belong to the set ${\cal B}(h,d)$ which was claimed to contain all trees with $D' = D + 1$.  
	
	\begin{figure}[ht]
		\centering
		\begin{minipage}{6.5cm}
			\includegraphics[width=\textwidth]{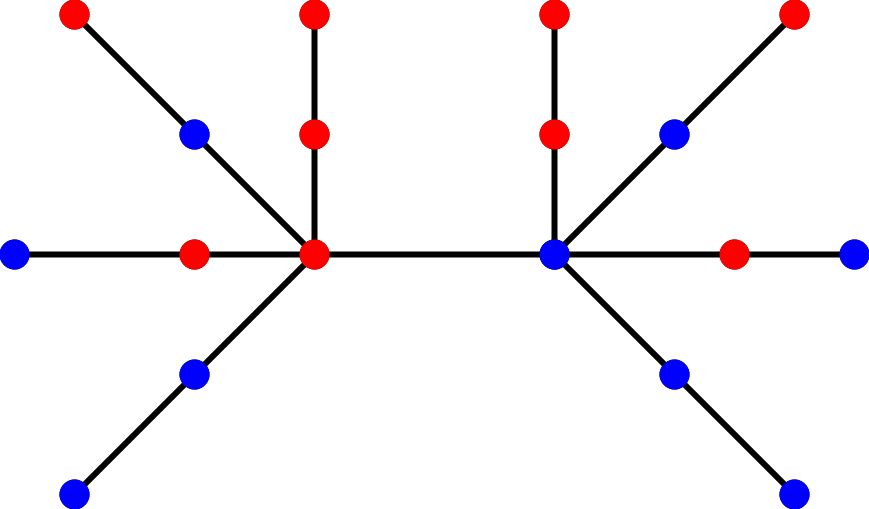}
		\end{minipage}
		\hspace{1em}
		\begin{minipage}{6.5cm}
			\includegraphics[width=\textwidth]{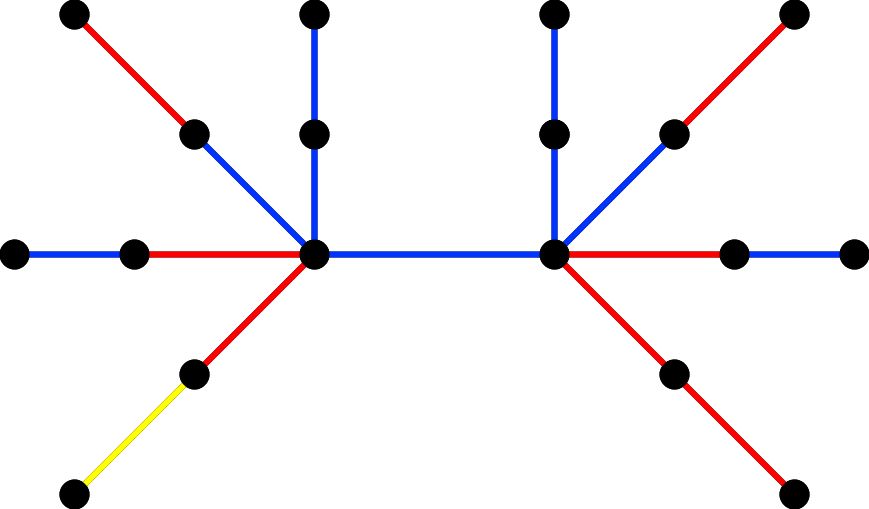}
		\end{minipage}
		\caption{A tree $T$ with $D(T) = 2$, $D'(T) = 3$ and $T\not\in{\cal B}(h,d)$}
		\label{counter}
	\end{figure}
	
	%
	%
	
	
	In this section, we give a complete characterisation of trees with $D' = D + 1$, thus correcting the flaw in ~\cite[Theorem 9]{kalinowski-2015}. Define a family ${\cal T}$ as follows. It consists of those trees $T$ of order at least $3$, for which the following conditions are fulfilled. 
	\begin{enumerate}
		\item $T$ is a bicentric tree with the central edge $e=vw$. 
		\item There is an isomorphism between the rooted trees $T_v$ and $T_w$. 
		\item There is a unique distinguishing edge-labeling of the rooted tree $T_v$ using $D(T)$ labels.
	\end{enumerate}
	The following theorem now states that the family ${\cal T}$ contains all trees with $D'(T) = D(T)+1$.
	
	\begin{theorem}
		\label{thm:trees}
		Let $T$ be a tree of order at least $3$. Then 
		$$D'(T) = \begin{cases}
		D(T) + 1 & {\rm if }\;T\in {\cal T}, \\
		D(T) & {\rm otherwise}.
		\end{cases}$$
	\end{theorem}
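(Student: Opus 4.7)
The plan is to prove the equivalence $T\in\mathcal{T} \iff D'(T) = D(T)+1$. Since \eqref{eq:bound} already gives $D'(T) \leq D(T)+1$, and the complementary inequality $D'(T) \geq D(T)$ for trees of order at least $3$ follows by extending Observation~\ref{obs:co-label} to an unrooted tree (convert a distinguishing edge labeling to a distinguishing vertex labeling by assigning to each non-root vertex the label of its parent edge, using the centre vertex as root in the unicentric case or both endpoints of the centre edge labelled $\phi(e)$ in the bicentric case), it suffices to decide which of $D(T)$ and $D(T)+1$ is achieved. Put $k = D(T)$.

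For the $(\Rightarrow)$ direction, assume $T\in\mathcal{T}$ and take any edge labeling $\phi\colon E(T) \to [k]$. I would first examine the restrictions $\phi|_{T_v}$ and $\phi|_{T_w}$. If either, say $\phi|_{T_v}$, is not distinguishing as an edge labeling of the rooted tree $T_v$, a non-trivial root-fixing automorphism preserving it extends by the identity on $T_w$ to a non-trivial automorphism of $T$ preserving $\phi$. Otherwise both restrictions are distinguishing; pulling $\phi|_{T_w}$ back through the rooted-tree isomorphism $\sigma_0\colon T_v \to T_w$ from condition~2 gives another distinguishing edge labeling of $T_v$ with $k$ labels, and condition~3 (uniqueness up to ${\rm Aut}(T_v)$) then yields $\tau \in {\rm Aut}(T_v)$ with $\phi|_{T_w}\circ\sigma_0 = \phi|_{T_v}\circ\tau^{-1}$. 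The map acting as $\sigma_0\circ\tau$ on $T_v$ and as $\tau^{-1}\circ\sigma_0^{-1}$ on $T_w$ is then a non-trivial swap automorphism of $T$, and a short substitution using the displayed identity shows that it preserves $\phi$.

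For the $(\Leftarrow)$ direction I argue the contrapositive, splitting into three cases according to which property of $\mathcal{T}$ fails. When $T$ is unicentric, Observation~\ref{obs:co-label} applied with the centre as root immediately yields $D'(T) = D(T)$. When $T$ is bicentric with $T_v \not\cong T_w$, every automorphism of $T$ fixes $v$ and $w$ separately, so distinguishing edge labelings of the rooted halves (with at most $D(T)$ labels, again by Observation~\ref{obs:co-label}) combined with any label on $e$ yield a distinguishing edge labeling of $T$ with at most $D(T)$ labels. When $T$ is bicentric with $T_v \cong T_w$ but condition~3 fails, there exist two distinguishing $k$-labelings $\phi_1,\phi_2$ of $T_v$ not related by any automorphism of $T_v$; placing $\phi_1$ on $T_v$ and $\phi_2\circ\sigma_0^{-1}$ on $T_w$ produces a labeling that no swap automorphism can preserve, since preservation would, by running the forward calculation in reverse, force $\phi_1 = \phi_2\circ\tau$ for some $\tau\in{\rm Aut}(T_v)$, contrary to choice. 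The main obstacle I anticipate is fixing the correct reading of ``unique'' in condition~3 (I would read it as uniqueness up to ${\rm Aut}(T_v)$) and applying it consistently in both directions; once this is settled, the remaining verification---most notably that the constructed swap automorphism preserves $\phi$ on the $T_w$-side as well as on $T_v$---reduces to applying the identity $\phi|_{T_w}\circ\sigma_0 = \phi|_{T_v}\circ\tau^{-1}$ at argument $\sigma_0^{-1}(g)$.
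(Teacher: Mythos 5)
Your proposal is correct and follows essentially the same route as the paper: decompose at the (bi)centre, use the vertex/edge co-labeling correspondence of Observation~\ref{obs:co-label} to get $D(T)\le D'(T)$ and to handle the unicentric and non-isomorphic-halves cases, use two inequivalent distinguishing labelings of $T_v$ when uniqueness fails, and in the extremal case build a label-preserving swap automorphism from the rooted isomorphism $\sigma_0$ and the uniqueness (up to ${\rm Aut}(T_v)$) of the distinguishing labeling — which is exactly the paper's argument, with your explicit $\sigma_0\circ\tau$ construction merely spelling out the automorphism the paper asserts to exist; your reading of ``unique'' as unique up to ${\rm Aut}(T_v)$ also matches the paper's usage.
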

	
	Before we prove this theorem, we state and prove a couple of auxiliary results. Both of them are essentially contained in the proof of \cite[Theorem 9]{kalinowski-2015}.
	
	\begin{lemma}
		\label{lem:central-vertex-d-to-d'}
		If $T$ is a unicentric tree, then $D'(T) = D(T)$.  
	\end{lemma}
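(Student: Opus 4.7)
The plan is to reduce the statement directly to Observation~\ref{obs:co-label} by exploiting the fact that in a tree every automorphism preserves the center setwise. Since $T$ is unicentric, its center is a single vertex $v$, so every automorphism of $T$ must fix $v$. Consequently, if we regard $T$ as a rooted tree with root $v$, then the automorphism group of $T$ (as an unrooted tree) coincides with the automorphism group of $(T,v)$ (as a rooted tree).

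From this identification, I would conclude that a vertex labeling of $T$ is distinguishing in the unrooted sense if and only if it is distinguishing for the rooted tree $(T,v)$, and likewise for edge labelings. Therefore
\[
D(T) = D\bigl((T,v)\bigr) \quad \text{and} \quad D'(T) = D'\bigl((T,v)\bigr).
\]
Now Observation~\ref{obs:co-label} asserts that the distinguishing number and the distinguishing index of a rooted tree are always equal, so $D((T,v)) = D'((T,v))$, and combining these three equalities yields $D(T) = D'(T)$.

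There is essentially no obstacle here beyond noting the standard fact that automorphisms of a tree preserve its center: once the root is fixed automatically, the vertex/edge correspondence of Observation~\ref{obs:co-label} takes care of everything. The argument also highlights why the bicentric case (treated in Theorem~\ref{thm:trees}) is genuinely different: there, an automorphism may swap the two central vertices, so the rooted-tree reduction is no longer available and an extra label may be needed to break this swap.
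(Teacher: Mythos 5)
Your proposal is correct and follows exactly the paper's argument: every automorphism of a unicentric tree fixes the central vertex, so $T$ can be treated as a rooted tree, and Observation~\ref{obs:co-label} then gives $D(T)=D'(T)$. You simply spell out in more detail the identification of automorphism groups that the paper leaves implicit.
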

	
	\begin{proof}
		This follows from Observation \ref{obs:co-label} by noting that every automorphism of a unicentric tree must fix the central vertex (and thus can be seen as an automorphism of a rooted tree).
	\end{proof}
	
	\begin{lemma}
		\label{lem:central-edge-d-to-d'+1}
		If $T$ is a bicentric tree $T$, then $D(T) \le D'(T) \le D(T) +1$.  
	\end{lemma}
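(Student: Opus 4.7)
The upper bound $D'(T)\le D(T)+1$ is just a special case of inequality~(\ref{eq:bound}) applied to $T$, so all the content is in the lower bound $D(T)\le D'(T)$. My plan is to start from an arbitrary distinguishing edge labeling $f':E(T)\to[d]$ with $d=D'(T)$, and construct from it a distinguishing vertex labeling $f$ of $T$ that uses the same $d$ labels.

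The construction mirrors Observation~\ref{obs:co-label}, applied to each half of $T$ separately. Split $T$ along the central edge $e=vw$ into the rooted trees $T_v$ and $T_w$. For every non-central vertex $x$, let $f(x)$ be the $f'$-label of the edge joining $x$ to its parent in whichever of $T_v$ or $T_w$ contains it; at the two central vertices set $f(v)=f(w)=f'(e)$ (the precise choice here being irrelevant). Then $f|_{V(T_v)}$ has $f'|_{E(T_v)}$ as its edge co-labeling with respect to $v$, and analogously on the $w$-side, so $f$ uses at most $d$ labels.

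To verify that $f$ is distinguishing, I take an arbitrary non-trivial $\alpha\in{\rm Aut}(T)$. The central edge is preserved by every automorphism, so $\alpha$ either fixes both $v,w$ or swaps them. If it fixes them, then $\alpha$ restricts to rooted tree automorphisms of $T_v$ and $T_w$, at least one of which (say $\alpha|_{T_v}$) is non-trivial. I first observe that $f'|_{E(T_v)}$ is itself a distinguishing edge labeling of the rooted tree $T_v$: otherwise a non-trivial rooted automorphism preserving it could be extended by the identity on $T_w$ to contradict $f'$ being distinguishing on $T$. Observation~\ref{obs:co-label} then promotes this to a distinguishing vertex labeling of the rooted tree $T_v$, which is exactly $f|_{V(T_v)}$. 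Hence $\alpha|_{T_v}$ must change some value of $f$.

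The remaining case, where $\alpha$ swaps $v$ and $w$, is the one I expect to be slightly more subtle. Here $\alpha$ fixes $e$ setwise, so its edge label is preserved automatically; the $f'$-distinguishing witness must therefore be a non-central edge $e^*=xy$ (say with $y$ the parent of $x$ in $T_v$) with $f'(\alpha(e^*))\ne f'(e^*)$. Since $\alpha$ sends the root $v$ of $T_v$ to the root $w$ of $T_w$, it carries parents to parents, so $\alpha(y)$ is the parent of $\alpha(x)$ in $T_w$; consequently $f(\alpha(x))=f'(\alpha(e^*))\ne f'(e^*)=f(x)$. The main obstacle, then, is this bookkeeping across the $v\leftrightarrow w$ swap and ensuring that the witnessing edge cannot be forced to be $e$ itself; once this is checked, the lower bound follows.
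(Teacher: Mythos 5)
Your proposal is correct, and for the lower bound it is essentially the paper's argument: invert the co-labeling construction on $T_v$ and $T_w$ to get $f$ from $f'$, use Observation~\ref{obs:co-label} to rule out label-preserving automorphisms that fix $v$ and $w$, and handle a swap automorphism by transporting an $f'$-witness across the central edge (equivalently, the paper phrases it as: a swap automorphism preserving $f$ would also preserve $f'$). The worry you flag at the end is already settled by your own observation: any automorphism of a bicentric tree fixes the central edge setwise, so $e$ can never be an $f'$-witness, the witness is automatically non-central, and your parent-to-parent bookkeeping closes the case. The only real divergence is the upper bound: you invoke the general inequality~\eqref{eq:bound}, which is legitimate since it is a quoted prior result (and the paper itself remarks after the lemma that the assertion holds for all connected graphs), whereas the paper reproves it for trees with a short self-contained construction: co-label both halves from a distinguishing vertex labeling, label $e$ arbitrarily, and recolour one edge of $T_v$ with the extra label $D(T)+1$ to break any swap automorphism. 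The paper's route buys self-containedness; yours buys brevity. (Both arguments, like the lemma as stated, implicitly assume order at least $3$, since $D'(K_2)$ is not defined and \eqref{eq:bound} requires order at least $3$.)
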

	
	\begin{proof}
		Throughout the proof let $e = vw$ be the central edge of $T$. Note that every automorphism of $T$ either fixes both $v$ and $w$, or swaps them.
		
		For the first inequality let $f'$ be a distinguishing edge labeling and pick a vertex labeling  $f$ such that $f'$ is the co-labeling of $f$ on $T_v$ and $T_w$. Observation \ref{obs:co-label} makes sure that no automorphism which fixes both $v$ and $w$ preserves the labeling $f$. If there is an automorphism which swaps $v$ and $w$ and preserves the labeling $f$, then this automorphism would also preserve the labeling $f'$.
		
		For the second inequality start with a distinguishing vertex labeling $f$ and label the edges of $T_v$ and $T_v$ by the corresponding co-labelings respectively. Label the central edge $e$ arbitrarily. This ensures by Observation \ref{obs:co-label} that no automorphism which fixes both $v$ and $w$ preserves the resulting edge-labeling. To ensure that the same is true for every automorphism which swaps $v$ and $w$, relabel one of the edges in $T_v$ using an additional label $D(T)+1$.
	\end{proof}
	
	As we already said in the introduction, the assertion of Lemma~\ref{lem:central-edge-d-to-d'+1} actually holds for all connected graphs. 
	
	\begin{proof}[Proof of Theorem \ref{thm:trees}]
		Observe first that the result is clearly true if $T$ is an asymmetric tree, because in this case $D(T) = D'(T) = 1$ and $T\notin {\cal T}$. Hence in the rest of the proof let $D(T)\ge 2$. 
		
		If $T$ is a unicentric tree, then $D'(T) = D(T)$ holds by Lemma~\ref{lem:central-vertex-d-to-d'}. Hence assume in the rest of the proof that $T$ is a bicentric tree with the central edge $e=vw$.
		
		By Lemma~\ref{lem:central-edge-d-to-d'+1} we get  $D(T) \le D'(T)\le D(T) + 1$.  As usual, let $T_v$ and $T_w$ be the components of $T-\{vw\}$ with $v\in T_v$ and $w\in T_w$.
		
		Suppose first that $T_v$ and $T_w$ are not isomorphic. Then for any $\alpha\in {\rm Aut}(T)$  we have $\alpha(v) = v$ and $\alpha(w)=w$. Let $f:V(T)\rightarrow [D(T)]$ be a distinguishing vertex labeling. Let $f':E(T) \rightarrow [D(T)]$ be defined as follows. Set $f'(vw) = 1$, and on $T_v$ and $T_w$ let $f'$ coincide with the co-labelings of $f|T_v$ and $f|T_w$, respectively. Then $f'$ is a distinguishing edge labeling and hence $D'(T)\le D(T)$ and equality holds by Lemma \ref{lem:central-edge-d-to-d'+1}.
		
		Suppose next that $T_v$ and $T_w$ are isomorphic and that $T_v$ admits two non-isomorphic distinguishing edge labelings (as a rooted tree) with $D(T)$ labels, say $g'$ and $g''$. Let now $f':E(T) \rightarrow [D(T)]$ be defined as follows. Set $f'(vw) = 1$, and on $T_v$ and $T_w$ let $f'$ coincide with $g'$ and $g''$, respectively. Then $f'$ is a distinguishing edge labeling and hence again $D'(T) = D(T)$. 
		
		Until now we have proved that unless $T\in {\cal T}$, then $D'(T) = D(T)$. To complete the proof we need to show that if $T\in {\cal T}$, then $D'(T) = D(T)+1$. Suppose on the contrary that this is not the case. So let $T\in {\cal T}$ be such that $D'(T) = D(T)$ and let $f': E(T) \rightarrow [D(T)]$ be a distinguishing edge labeling of $T$. Thus the restrictions of $f'$ to $T_v$ and $T_w$, respectively, are distinguishing edge labelings of $D(T)$ labels. Since $T_v$ and consequently its isomorphic copy $T_w$ admit unique $v$-distinguishing edge $D(T)$-labelings, there exists $\alpha\in {\rm Aut}(T)$ that exchanges $T_v$ with $T_w$ and preserves $f$, a contradiction. We conclude that $D'(T) = D(T)+1$.  
	\end{proof}
	\section{Unicyclic graphs}
	\label{sec:unicyclic}
	
	In this section we prove that among the unicyclic graphs the upper bound~\eqref{eq:bound} is never attained. 
	
	\begin{theorem}\label{thm:unicyclic}
		If  $G$ is a connected unicyclic graph, then $D'(G)=D(G)$. 
	\end{theorem}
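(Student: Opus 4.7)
The plan is to lift a distinguishing vertex labeling $f$ of $G$ to a distinguishing edge labeling $f'$ using the same number of labels, in the spirit of Observation~\ref{obs:co-label}. Let $C$ denote the unique cycle of $G$ with vertex set $\{c_1, \dots, c_\ell\}$, and for each $i$ let $T_i$ be the connected component of $G - E(C)$ containing $c_i$, viewed as rooted at $c_i$. Since $C$ is the only cycle of $G$, every $\alpha \in \mathrm{Aut}(G)$ preserves $V(C)$ setwise and induces an element of $\mathrm{Aut}(C) \cong D_{2\ell}$. Set $d = D(G)$; if $d = 1$ then $G$ is asymmetric and $D'(G) = 1$, so assume $d \ge 2$ and fix a distinguishing labeling $f \colon V(G) \to [d]$. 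Note that $f|_{T_i}$ must then be distinguishing on the rooted tree $T_i$, since any non-trivial rooted automorphism of $T_i$ preserving $f|_{T_i}$ fixes $c_i$ and extends, by the identity on $V(G) \setminus V(T_i)$, to a non-trivial element of $\mathrm{Aut}(G)$ preserving $f$. Define $f' \colon E(G) \to [d]$ by using, on the edges of each $T_i$, the co-labeling of $f|_{T_i}$ with respect to the root $c_i$; the labels on the $\ell$ cycle edges will be chosen below. A direct check shows that any $\alpha \in \mathrm{Aut}(G)$ preserving $f'$ on the tree edges also preserves $f$ on $V(G) \setminus V(C)$.

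Let $\bar H \le D_{2\ell}$ denote the image in $\mathrm{Aut}(C)$ of the subgroup of $\mathrm{Aut}(G)$ whose elements preserve $f$ on $V(G) \setminus V(C)$. It now suffices to choose an edge-labeling of $C$ with $d$ labels such that no non-trivial element of $\bar H$ preserves it. If $d \ge 3$, this follows from $D'(C_\ell) \le 3$: any edge-labeling of $C_\ell$ distinguishing the full action of $D_{2\ell}$ also distinguishes the subgroup $\bar H$. If $d = 2$ and $\ell \ge 6$, the same argument works since $D'(C_\ell) = 2$. The delicate remaining case is $d = 2$ with $\ell \in \{3,4,5\}$, where $D'(C_\ell) = 3$. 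Here I would first show that $\bar H$ must be a proper subgroup of $D_{2\ell}$: otherwise every non-trivial $\sigma \in D_{2\ell}$ lifts to a non-trivial $\tilde\sigma \in \mathrm{Aut}(G)$ preserving $f$ off $C$, and since $f$ is distinguishing $\tilde\sigma$ cannot preserve $f|_{V(C)}$, forcing $f|_{V(C)}$ to distinguish $C_\ell$ under the full $D_{2\ell}$ using only $2$ labels, which contradicts $D(C_\ell) = 3$. A short case analysis of the proper subgroups of $D_{2\ell}$ for $\ell \in \{3,4,5\}$ then exhibits the desired $2$-edge-labeling of $C$.

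Finally, suppose $\alpha \in \mathrm{Aut}(G)$ preserves the constructed $f'$. The tree co-labelings force $\alpha$ to preserve $f$ off $V(C)$, and the cycle edge labels force $\alpha$ to act trivially on $C$, so $\alpha$ fixes each $c_i$. Restricted to each $T_i$, $\alpha$ is then a rooted automorphism preserving $f|_{T_i}$, hence $\alpha|_{T_i}$ is trivial, and therefore $\alpha = \mathrm{id}$. The main obstacle is the small-cycle case $d = 2$, $\ell \in \{3,4,5\}$: one must simultaneously use the bound $D(C_\ell) = 3$ to force $\bar H$ to be a proper subgroup of $D_{2\ell}$ and enumerate the proper subgroups of $D_{2\ell}$ to produce an appropriate $2$-labeling of the cycle edges.
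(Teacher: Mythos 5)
Your proposal establishes only half of the theorem. Lifting a distinguishing vertex labeling to an edge labeling with the same number of labels gives $D'(G)\le D(G)$, but the statement is an equality, and the reverse inequality $D(G)\le D'(G)$ is \emph{not} a general fact (for instance $D'(K_p)=2$ while $D(K_p)=p$ for $p\ge 6$), so it has to be proved for unicyclic graphs. The paper spends half of its proof on exactly this converse direction: starting from a distinguishing edge labeling $f'$, it reconstructs a vertex labeling via the co-labeling correspondence on the rooted trees $T_i$, and then chooses vertex labels on the cycle so that any automorphism of $C$ preserving the new vertex labels also preserves $f'|C$ --- again with a dedicated case analysis when $3\le \ell\le 5$ and only two labels occur on $C$. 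Nothing in your write-up addresses this direction, so as it stands the theorem is not proved.

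The direction you do treat is essentially sound and parallels the paper's construction (co-labelings on the rooted trees plus a tailored labeling of the cycle edges), and your treatment of the delicate case $d=2$, $\ell\in\{3,4,5\}$ is a genuinely different and rather clean route: instead of the paper's explicit table matching each vertex $2$-labeling of $C_\ell$ with an edge $2$-labeling (Fig.~\ref{fig:vertex-to-edge}), you show the image subgroup $\bar H\le D_{2\ell}$ must be proper (using $D(C_\ell)=3$) and then invoke a subgroup-by-subgroup choice of a $2$-edge-labeling of $C$ that no non-trivial element of $\bar H$ preserves. That claim is in fact true --- a single edge of the second label defeats the rotation subgroup and the vertex-type reflections, while for $\ell=4$ the Klein subgroup generated by the half-turn and the two edge-midpoint reflections needs the pattern $2,2,1,1$ --- but in your sketch this enumeration is asserted rather than carried out, and it is precisely where the single-label-pattern subtleties live. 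To have a complete proof you must (i) supply the omitted inequality $D(G)\le D'(G)$, and (ii) actually perform the enumeration of proper subgroups of $D_{2\ell}$ for $\ell\in\{3,4,5\}$ with the corresponding labelings.
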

	\proof
	Let $C = v_1v_2\cdots v_tv_1$ be the cycle of $G$, where $3\leq t \leq n$. Let $T_i$, $i\in [t]$, be the maximal subgraph of $G$ that contains $v_i$ and no other vertex of $C$. Then $T_i$ is a tree, consider it as a rooted tree with the root $v_i$. It is possible that $T_i$ is single vertex graph. If $G=C_t$, then the result holds because $D' = D$ holds for all cycles, see~\cite[Proposition~5]{kalinowski-2015}. The results also clearly holds if ${\rm Aut}(G)$ is trivial, hence assume in the rest of the proof that $D(G)\ge 2$ and $D'(G)\ge 2$. 
	
	We first show that $D(G)\leq D'(G)$. For this purpose let $f'$ be a distinguishing edge labeling of $G$ and define a vertex labeling $f$ as follows. On $V(T_i)\setminus\{v_i\}$, let $f$ be such that $f'$ is the co-labeling of $f$ restricted to $T_i$. Then, by Observation \ref{obs:co-label}, $f$ is a distinguishing labeling of $V(T_i)\setminus\{v_i\}$ provided that $v_i$ is fixed. If $t\geq 6$, then let $f|C$ be a distinguishing 2-labeling of $C$. If $3\leq t \leq 5$ and $f'|C$ uses at least three labels, then let $f|C$ be a distinguishing 3-labeling of $C$. In the last case we have $3\leq t \leq 5$ and $f'|C$ uses two labels. Then label the vertices of $C$ with two colors as shown in Fig.~\ref{fig:edge-to-vertex} for all possible edge labelings of $C$ with two colors. In all the cases one can verify that if an automorphism $\alpha$ of $C$ preserves $f|C$, then $\alpha$ also preserves $f'|C$. Since $f'$ is distinguishing we conclude that $f$ is also distinguishing and consequently $D(G)\le D'(G)$.  
	
	\begin{figure}[ht!]
		\begin{center}
			\includegraphics[width=0.8\textwidth]{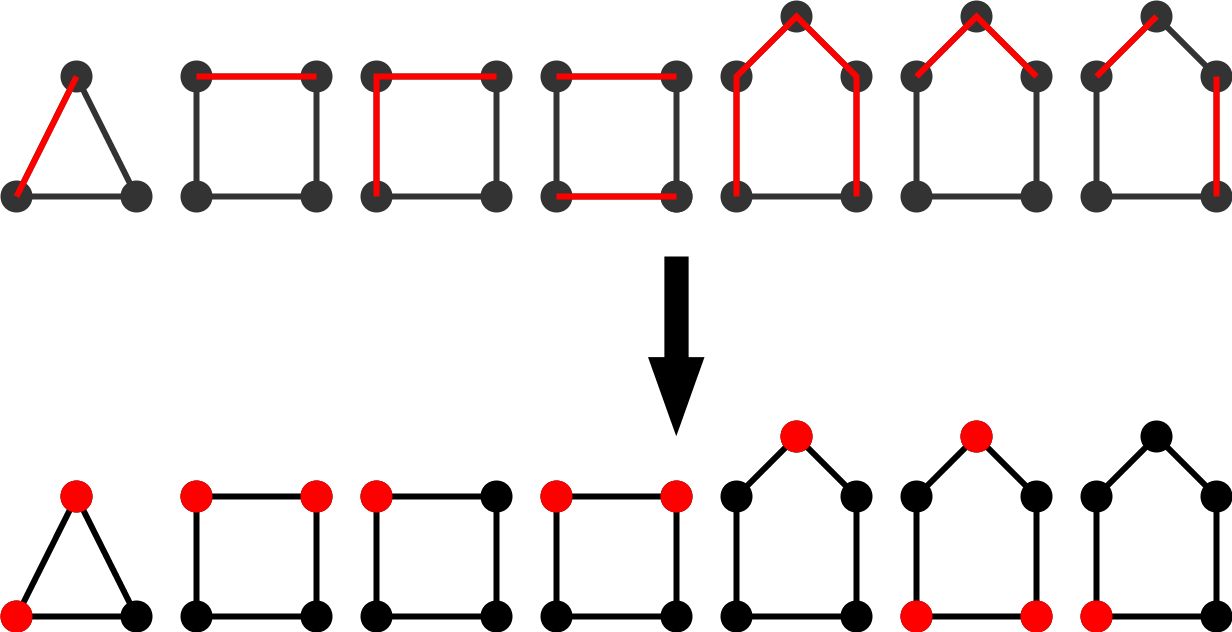}
			\caption{All non-equivalent edge $2$-labelings of $C$ and their respective transformations to vertex $2$-labelings of $C$}
			\label{fig:edge-to-vertex} 	
		\end{center}
	\end{figure}
	
	To show that also $D'(G)\leq D(G)$ holds, we proceed similarly as above. Let $f$ be a  distinguishing vertex labeling of $G$ and define an edge labeling $f'$ as follows. On each $T_i$ let $f'$ be the co-labeling of $f|T_i$ (with respect to the root $v_i$). If $t\geq 6$, then set $f'|C$ to be be a distinguishing edge 2-labeling of $C$. If $3\leq t \leq 5$ and $f|C$ uses at least three labels, then let $f'|C$ be a distinguishing edge 3-labeling of $C$. Finally, if $3\leq t \leq 5$ and $f|C$ uses two labels, then let $f'|C$ be as shown in Fig.~\ref{fig:vertex-to-edge} for all possible vertex labelings of $C$, respectively. 
	
	\begin{figure}[ht!]
		\begin{center}
			\includegraphics[width=0.8\textwidth]{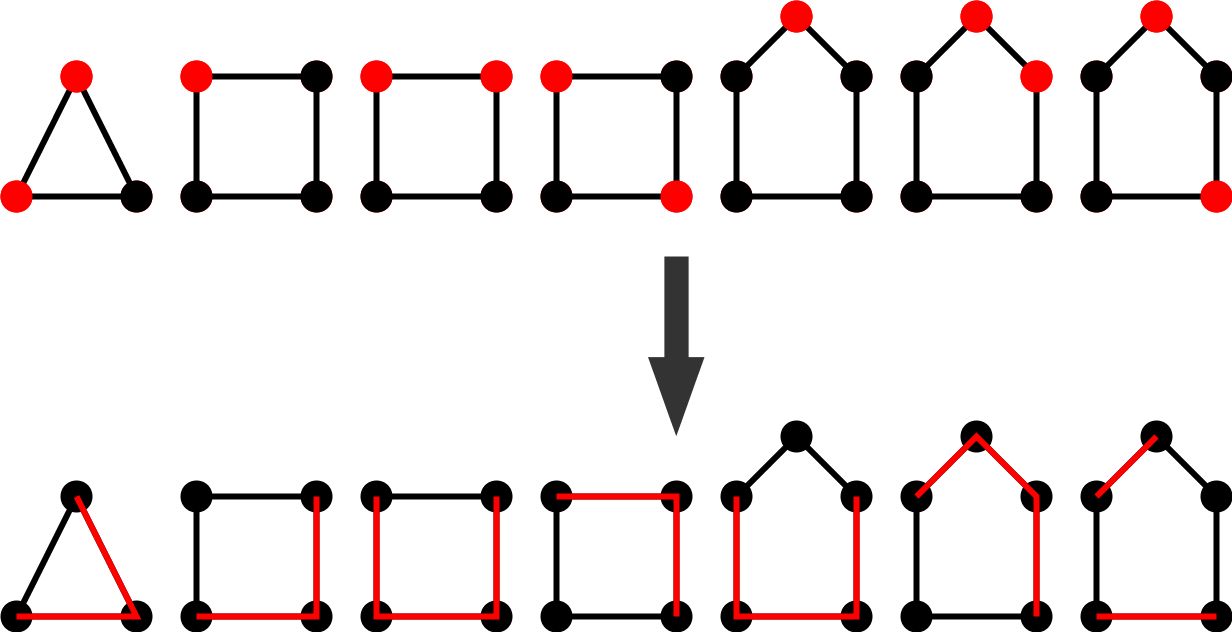}
			\caption{All non-equivalent vertex $2$-labelings of $C$ and their respective transformations to edge $2$-labelings of $C$}
			\label{fig:vertex-to-edge}  	
		\end{center}
	\end{figure}
	
	Again we can verify that if an automorphism $\alpha$ of $C$ preserves $f'|C$, then $\alpha$ also preserves $f|C$. So $f'$ is distinguishing and we conclude that $D'(G)\le D(G)$.
	\qed    
	
	\section*{Acknowledgements}
	
	Sand Klav\v{zar} acknowledges the financial support from the Slovenian Research Agency (research core funding No.\ P1-0297). Florian Lehner acknowledges the support of the Austrian Science Fund (FWF), grant number J 3850-N32


\begin{thebibliography}{99}
		
		\bibitem{albertson-1996} 
		M.~O.~Albertson, K.~L.~Collins, 
		Symmetry breaking in graphs, 
		Electron.\ J.\ Combin.\ 3 (1996) \#R18.
		
		\bibitem{alikhani-2017+} 
		S.~Alikhani, S.~Soltani, 
		Distinguishing number and distinguishing index of certain graphs, 
		Filomat, to appear, \texttt{http://arxiv.org/abs/1602.03302}.
		
		\bibitem{arvind-2008} 
		V.~Arvind, C.~Cheng, N.~Devanur, 
		On computing the distinguishing numbers of planar graphs and beyond: a counting approach,             SIAM J.\ Discrete Math.\ 22 (2008) 1297--1324.
		
		\bibitem{arvind-2004} 
		V.~Arvind, N.~Devanur, 
		Symmetry breaking in trees and planar graphs by vertex coloring, 
		in: Proceedings of the 8th Nordic Combinatorial Conference, Aalborg University, Aalborg, Denmark, 2004.
		
		\bibitem{cavers-2013}
		M.~Cavers, K.~Seyffarth, 
		Graphs with large distinguishing chromatic number,
		Electron.\ J.\ Combin.\ 20 (2013) \#P19, 17 pp.
		
		\bibitem{Chan}  
		M.~Chan, 
		The distinguishing number of the augmented cube and hypercube Powers, 
		Discrete Math.\ 308 (2008) 2330--2336.
		
		\bibitem{cheng-2006} 
		C.~Cheng, 
		On computing the distinguishing numbers of trees and forests, 
		Electron.\ J.\ Combin.\ 13 (2006) \#R11.
		
		\bibitem{collins-2006}
		K.~L.~Collins, A.~N.~Trenk,
		The distinguishing chromatic number,
		Electron.\ J.\ Combin.\ 13 (2016) \#R16.
		
		\bibitem{estaji-2017}
		E.~Estaji, W.~Imrich, R.~Kalinowski, M.~Pil\'sniak, T.~Tucker, 
		Distinguishing {C}artesian products of countable graphs,
		Discuss.\ Math.\ Graph Theory 37 (2017) 155--164.
		
		\bibitem{gravier-2017}
		S.~Gravier, K.~Meslem, S.~Schmidt, S.~Slimani, 
		A new game invariant of graphs: the game distinguishing number,
		Discrete Math.\ Theor.\ Comput.\ Sci.\ 19 (2017) no. 1, Paper No. 2, 15 pp.
		
		\bibitem{immel-2017}
		P.~Immel, P.~S.~Wenger, 
		The list distinguishing number equals the distinguishing number for interval graphs,
		Discuss.\ Math.\ Graph Theory 37 (2017) 165--174. 
		
		\bibitem{kalinowski-2015} 
		R.~Kalinowski, M.~Pil\'sniak, 
		Distinguishing graphs by edge colourings, 
		European J.\ Combin.\ 45 (2015) 124--131.
		
		\bibitem{klavzar-2006}
		S.~Klav\v{z}ar, T.-L.~Wong, X.~Zhu,
		Distinguishing labelings of group action on vector spaces and graphs,
		J.\ Algebra 303 (2006) 626--641.   
		
		\bibitem{Kim}   
		D. Kim, Y.S. Kwon and J. Lee, 
		The distinguishing numbers of Merged Johnson graphs, 
		Bull.\ Korean Math.\ Soc.\ 52 (2015) 395--408. 
		
		\bibitem{laflamme-2010}u
		C.~Laflamme, L.~Nguyen Van Th\'e, N.~Sauer,
		Distinguishing number of countable homogeneous relational structures,
		Electron.\ J.\ Combin.\ 17 (2010) \#R20.
		
		\bibitem{lehner-2016}
		F.~Lehner, 
		Distinguishing graphs with intermediate growth,
		Combinatorica 36 (2016) 333--347. 
		
		\bibitem{lehner-2017+}
		F.~Lehner, 
		Breaking graph symmetries by edge colourings,
		J.\ Combin.\ Theory Ser.\ B (2017), https://doi.org/10.1016/j.jctb.2017.06.001.
		
		\bibitem{smith-2014}
		S.~M.~Smith, M.~E.~Watkins, 
		Bounding the distinguishing number of infinite graphs and permutation groups,
		Electron.\ J.\ Combin.\ 21 (2014) \#P3.40. 
		
		\bibitem{wong-2010}
		T.-L.~Wong, X.~Zhu, 
		Distinguishing labeling of group actions,
		Discrete Math.\ 309 (2009) 1760--1765.
	\end{thebibliography}
\end{document}